\theoremstyle{plain}
\newtheorem{theorem}{Theorem}
\newtheorem{conj}{Conjecture}
\newtheorem{lemma}[theorem]{Lemma}
\theoremstyle{remark}
\providecommand{\1}{\mathds{1}}
\begin{document}

\date{Received April 1, 2020} 

\title{On stability of the Erd\H{o}s-Rademacher Problem}


\author{J\'ozsef Balogh}
\address{Department of Mathematics, University of Illinois at Urbana-Champaign, Urbana, Illinois 61801, USA, and Moscow Institute of Physics and Technology, Russian Federation.}
\thanks{Research of the first author is partially supported by NSF Grant DMS-1764123, Arnold O. Beckman Research Award (UIUC Campus Research Board RB 18132) and the Langan Scholar Fund (UIUC)}

\author{Felix Christian Clemen}
\address{Department of Mathematics, University of Illinois at Urbana-Champaign, Urbana, Illinois 61801, USA.}

\email{jobal@illinois.edu, fclemen2@illinois.edu.}

\subjclass{05C35}
\maketitle

\begin{abstract}
Mantel's theorem states that every $n$-vertex graph with $\lfloor \frac{n^2}{4} \rfloor +t$ 
edges, where $t>0$, contains a triangle. The problem of determining the minimum number of triangles in such a graph is usually referred to as the Erd\H{o}s-Rademacher problem. Lov\'asz and Simonovits proved that there are at least $t\lfloor n/2 \rfloor$ triangles in each of those graphs. Katona and Xiao considered the same problem under the additional condition that there are no $s-1$ vertices covering all triangles. They settled the case $t=1$ and $s=2$. Solving their conjecture, we determine the minimum number of triangles for every fixed pair of $s$ and $t$, when $n$ is sufficiently large. Additionally, solving another conjecture of Katona and Xiao, we extend the theory for considering cliques instead of triangles.
\end{abstract}
\section{Introduction}
A classical result in extremal combinatorics is Mantel's theorem~\cite{Mantel}. It says that every $n$-vertex graph with at least $\lfloor \frac{n^2}{4} \rfloor +1$ edges contains a triangle. There have been various extensions of Mantel's theorem.  Tur\'an~\cite{Turanstheorem} generalized it for cliques: Every $n$-vertex graph with at least $t_r(n)+1$ edges contains a clique of size $r+1$, where $t_r(n)$ is the number of edges of the complete balanced $r$-partite graph on $n$ vertices. A result of Rademacher, see \cite{MR81469}, says that every graph on $n$ vertices with $\lfloor \frac{n^2}{4} \rfloor +1$ edges contains at least $\lfloor \frac{n}{2} \rfloor$ triangles and this is best possible. Erd\H{o}s~\cite{MR81469,MR0137661} conjectured that an $n$-vertex graph with $\lfloor \frac{n^2}{4} \rfloor +t$ edges for $t<\frac{n}{2}$ contains at least $t\lfloor \frac{n}{2}\rfloor$ triangles. This was proven by Lov\'asz and Simonovits~\cite{Lovasz}. Xiao and Katona~\cite{Katona} proved a stability variant of the Lov\'asz and Simonovits result for $t=1$: If there is no vertex contained in all triangles, then there are at least $n-2$ triangles in $G$. We will prove two extensions of this statement, one verifying a conjecture by Xiao and Katona~\cite{Katona} and the other proving a slight modification of another conjecture by Xiao and Katona~\cite{Katona}. 

For a graph $G$ denote $e(G)$ the number of edges of $G$ and $T_r(G)$ the number of copies of $K_{r}$ in $G$. A $K_r$-\emph{covering set} in $V(G)$ is a vertex set that contains at least one vertex of every copy of $K_r$ in $G$. The $r$-\emph{clique covering number} $\tau_{r}(G)$ is the size of the smallest $K_r$-covering set. A \emph{triangle covering set} is a $K_3$-covering set and the  \emph{triangle covering number} is the $3$-clique covering number. Given a vertex partition $V(G)=V_1\cup V_2 \cup \ldots \cup V_r$, we call an edge $e\in E(G)$ a \emph{class-edge} if $e\in E(G[V_i])$ for some $i$ and a \emph{cross-edge} otherwise. A set of two vertices $\{x,y\}$ is a \emph{missing cross-edge} if $xy\not\in E(G)$ and $x\in V_i,y\in V_j$ for some $i\neq j$. Denote $e_G(V_1,V_2,\ldots,V_r)$ the number of cross-edges and  $e_G^c(V_1,V_2,\ldots,V_r)$ the number of missing cross-edges. We drop the index $G$ and just write $e(V_1,V_2,\ldots,V_r)$ and $e^c(V_1,V_2,\ldots,V_r)$, respectively, if $G$ is clear from context.
\begin{conj}[Xiao, Katona~\cite{Katona}]
\label{Katona1}
Let $t,s\in \mathbb{N}$ such that $0<t<s$. Suppose the graph $G$ has $n$ vertices and $\left\lfloor \frac{n^2}{4}\right\rfloor+t$ edges satisfying $\tau_{3}(G)\geq s$ and $n\geq n(s,t)$ is large. Then $G$ contains at least 
    \begin{align*}
        (s-1) \left\lfloor \frac{n}{2}\right\rfloor + \left \lceil \frac{n}{2}\right\rceil -2 (s-t)
    \end{align*}
    many triangles.
\end{conj}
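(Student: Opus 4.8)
The plan is to reduce to a near-extremal configuration, convert the covering hypothesis into a lower bound on the number of class edges of an optimal bipartition, and then count triangles through those edges. First I would dispose of the trivial case: if $T_3(G)\ge s\lceil n/2\rceil$ we are already done, since the claimed quantity is at most $s\lceil n/2\rceil$. So assume $T_3(G)<s\lceil n/2\rceil$, i.e. $G$ has only $O(n)$ triangles. Since $e(G)=\lfloor n^2/4\rfloor+t$, a stability argument of Lov\'asz--Simonovits type then yields a balanced bipartition $V(G)=V_1\cup V_2$ with $|V_1|=\lceil n/2\rceil$ and $|V_2|=\lfloor n/2\rfloor$ for which the number $k$ of class edges and the number $m$ of missing cross-edges are both bounded by a constant depending only on $s$ and $t$ (indeed the triangle count already forces $k=O(1)$, since a large $k$ would produce $\gg n$ triangles by the count below).

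The heart of the argument is a covering identity. Let $H$ be the spanning subgraph of $G$ consisting of the class edges. Deleting all class edges leaves the bipartite graph between $V_1$ and $V_2$, which is triangle-free; hence every triangle of $G$ contains at least one class edge. Therefore every vertex cover of $H$ is a triangle covering set, giving $\tau_3(G)\le\tau(H)$, where $\tau(H)$ denotes the minimum size of a vertex cover of $H$. Since selecting one endpoint from each edge of $H$ is a vertex cover, $\tau(H)\le k$. Combining, $k\ge\tau(H)\ge\tau_3(G)\ge s$, so an optimal bipartition has at least $s$ class edges. A direct edge count gives $e(G)=k+|V_1||V_2|-m$, and with $|V_1||V_2|=\lfloor n^2/4\rfloor$ this forces $m=k-t$.

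It remains to count triangles. For a class edge $e=uv$ inside $V_i$, the triangles $uvw$ with $w\in V_{3-i}$ number $|V_{3-i}|$ minus the vertices of $V_{3-i}$ nonadjacent to $u$ or to $v$; this deficit is at most $\deg_{\mathrm{miss}}(u)+\deg_{\mathrm{miss}}(v)$, where $\deg_{\mathrm{miss}}(x)$ counts the missing cross-edges at $x$. Summing over class edges and discarding the $O(1)$ triangles that use two or three class edges (which only help), I obtain
\[ T_3(G)\ \ge\ \sum_{e=uv\in H}|V_{\mathrm{opp}(e)}|\;-\;\sum_{x\in V(G)}\deg_H(x)\,\deg_{\mathrm{miss}}(x). \]
In the extremal regime $H$ is a matching, so each missing cross-edge is charged at most twice and the subtracted sum is at most $2m=2(k-t)$, while each surviving term $|V_{\mathrm{opp}(e)}|$ equals $\lfloor n/2\rfloor$ or $\lceil n/2\rceil$ according to the part containing $e$. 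Optimizing the placement of the $k\ge s$ class edges, together with the observation that the resulting bound is increasing in $k$ so that $k=s$ is worst, yields
\[ T_3(G)\ \ge\ (s-1)\left\lceil \tfrac{n}{2}\right\rceil+\left\lfloor \tfrac{n}{2}\right\rfloor-2(s-t). \]
Tightness is witnessed by taking $K_{\lceil n/2\rceil,\lfloor n/2\rfloor}$, adding a matching of $s$ class edges, and deleting $s-t$ cross-edges positioned so as to destroy two triangles apiece.

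The main obstacle is this counting step, specifically controlling $\sum_{x}\deg_H(x)\,\deg_{\mathrm{miss}}(x)$ when $H$ is \emph{not} a matching: a vertex of high $H$-degree could in principle allow a single missing cross-edge to destroy many triangles. I expect to rule this out by showing that, subject to $\tau(H)\ge s$, any such concentration increases the main term $\sum_e|V_{\mathrm{opp}(e)}|$ by more than it increases the deficit, so that a matching is extremal. Pinning the exact additive constant $-2(s-t)$ and the precise floor/ceiling split (rather than the cruder $s\lfloor n/2\rfloor-2(s-t)$) is the delicate part, and is where the interplay between the part hosting each class edge and the routing of the deleted cross-edges must be tracked exactly.
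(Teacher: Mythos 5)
There is a genuine gap here, and it is unfixable: the statement you set out to prove is \emph{false} for general $s,t$, which is in fact the point of this paper. The introduction notes that the Xiao--Katona conjecture ``in full generality holds only up to an additive constant,'' and Theorem~\ref{trianglemain} replaces it by the correct minimum $\min\{T_3(G_1),T_3(G_2)\}$, which for many parameters lies strictly below the conjectured value. Concretely, take the paper's Construction~2 with $a=0$: add a matching of $s$ edges inside the class $A$ of size $\lceil n/2\rceil$ and delete $s-t$ cross-edges joining a single vertex $u\in B$ to distinct matching endpoints $x_1,\dots,x_{s-t}$. Then $e=\lfloor n^2/4\rfloor+t$, $\tau_3=s$, each class-edge lies in $\lfloor n/2\rfloor$ triangles, and each deleted cross-edge $ux_i$ destroys exactly \emph{one} triangle (namely $x_iy_iu$, since $u$ has no neighbour inside $B$), so $T_3=s\lfloor n/2\rfloor-(s-t)$. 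For odd $n$ the conjectured bound equals $s\lfloor n/2\rfloor-s+2t-1$, which exceeds this by $t-1$; so the conjecture already fails for odd $n$ and every $t\geq 2$. Likewise, for odd $n$ and $s\geq 3$ the paper's Construction~1 with $a=0$ has $(s-1)\lfloor n/2\rfloor+\lceil n/2\rceil-2(s-t)$ triangles, below the conjectured bound by $s-2$ (note the floor/ceiling sit the other way around than in the conjecture: an edge in the \emph{larger} class makes triangles with the \emph{smaller} class), and for even $n$, $s\geq 5$ the unbalanced choice $a=1$ dips below as well. Any purported proof of Conjecture~\ref{Katona1} must therefore contain an error.

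Your argument breaks at exactly the two places where this falsity has to surface. First, the stability step: you cannot demand an \emph{exactly} balanced bipartition $|V_1|=\lceil n/2\rceil$, $|V_2|=\lfloor n/2\rfloor$ and simultaneously have $k,m=O(1)$. For Construction~1 with $a\geq 1$ (legal once $s-t\geq a^2+a$), every balanced bipartition forces some vertex of $A$ to sit together with $\Omega(n)$ vertices of $B$, creating $\Omega(n)$ class-edges and $\Omega(n)$ missing cross-edges; your parenthetical justification (``large $k$ would produce $\gg n$ triangles'') is exactly what fails, because each of those rebalancing class-edges lies in only $O(1)$ triangles --- its endpoints are nonadjacent to almost all of their own side, i.e.\ your deficit term $\sum_x \deg_H(x)\deg_{\mathrm{miss}}(x)$ is no longer $O(1)$. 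This is why the paper's Theorem~\ref{propositionmainclique} (proved via Theorem~\ref{cliquesupersat} plus edge flips) keeps the possibly unbalanced partition produced by stability, and why the imbalance parameter $a$ survives into the final answer. Second, your closing ``optimization'' step is asserted rather than proved, and it is false even within your balanced framework: among configurations with $k=s$ matching class-edges and $m=s-t$ missing cross-edges, putting all $s$ class-edges in the larger class lets every missing cross-edge destroy only one triangle (its $V_2$-endpoint has $H$-degree $0$), giving $s\lfloor n/2\rfloor-(s-t)$, which as computed above is smaller than your claimed lower bound for odd $n$, $t\geq 2$. The correct conclusion of your counting scheme is not the conjectured formula but a minimum over the competing configurations --- which, once unbalanced partitions are also allowed, is precisely the paper's $\min\{T_3(G_1),T_3(G_2)\}$.
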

Xiao and Katona~\cite{Katona} proved their conjecture for $t=1$ and $s=2$ and that it is best possible. However, their conjecture in full generality holds only up to an additive constant. We will determine precisely the minimum number of triangles a graph on $n$ vertices with $\lfloor n^2/4 \rfloor+t$ edges and $\tau_3(G)\geq s$ can have. Depending on $s,t$ and $n$ one of the following two constructions will be an extremal example. \\
{\bf Construction 1:} Let $G_1$ be the graph on $n$ vertices, where $V(G_1)=A \cup B$ with 
\begin{align*}
|A|=\left\lceil \frac{n}{2} \right\rceil  +a,\quad  |B|=\left\lfloor \frac{n}{2}\right\rfloor -a,
\end{align*}
where $a$ is chosen later to minimize the number of triangles in this construction. Pick $2(s-1)$ vertices 
 $x_1,y_1,\ldots,x_{s-1},y_{s-1}$ in $A$ and two vertices $u_1,u_2$ in $B$. Add the edges $\{x_i,y_i\}$ and $\{u_1,u_2\}$ to $K_{|A|,|B|}$ and delete the edges $\{u_1,x_1\},\ldots, \{u_1,x_\alpha \}$, where $\alpha:=s-t-a^2-\1_{\{2 \nmid n\}}a$. 
 Then, $G_1$ has $\lfloor \frac{n^2}{4} \rfloor +t$ edges, triangle covering number $\tau_{3}(G_1)=s$ and
\begin{align}
\label{constr1}
    T_3(G_1)&=  (s-1) \left(\left\lfloor \frac{n}{2}\right\rfloor-a\right) + \left( \left\lceil \frac{n}{2}\right\rceil +a\right)-2\left(s-t-a^2-\1_{\{2 \nmid n\}}a\right).
\end{align}
Now, choose $a\geq 0$ satisfying $\alpha\geq0$ and minimizing \eqref{constr1}. Note that $G_1$ is similar to the construction of Xiao and Katona~\cite{Katona} resulting in Conjecture~\ref{Katona1}, except that the class sizes are different in some cases. \\ 
{\bf Construction 2:} Let $G_2$ be the graph on $n$ vertices, where $V(G_2)=A \cup B$ with 
\begin{align*}|A|=\left\lceil \frac{n}{2} \right\rceil + a, \quad |B|=\left\lfloor \frac{n}{2}  \right\rfloor -a,
\end{align*}
where $a$ is chosen later to minimize the number of triangles in this construction.
Pick $2s$ vertices 
 $x_1,y_1,\ldots,x_{s},y_{s} \in A$ and one vertex $u\in B$. Add the edges $\{x_i,y_i\}$  to $K_{|A|,|B|}$ and delete the  edges $\{u,x_1\},\ldots, \{u,x_\alpha \}$, where $\alpha:=s-t-a^2-\1_{\{2 \nmid n \}}a$. The graph $G_2$ has $\lfloor \frac{n^2}{4} \rfloor +t$ edges, triangle covering number $\tau_{3}(G_2)=s$ and
\begin{align}
\label{constr2}
    T_3(G_2)=   s \left(\left\lfloor \frac{n}{2}\right\rfloor-a\right)-\left(s-t-a^2-\1_{\{2 \nmid n \}}a \right).
\end{align}
Now, choose $a\geq 0$ satisfying $\alpha\geq0$ and minimizing \eqref{constr2}. Our first main result is the following.
\begin{theorem}
\label{trianglemain}
    Let $t,s\in \mathbb{N}$ such that $0<t<s$. Suppose the graph $G$ has $n$ vertices and $\left\lfloor \frac{n^2}{4}\right\rfloor+t$ edges, $\tau_{3}(G)\geq s$ and $n\geq n(s,t)$ is large. Then $G$ contains at least $\min \{T_{3}(G_1),T_3(G_2)\}$ triangles.
\end{theorem}    
Note that both $G_1$ and $G_2$ achieve this minimum for some values of $s$ and $t$.
Xiao and Katona~\cite{Katona} also conjectured what the minimum number of copies of $K_{r+1}$ in a graph $G$ on $n$ vertices with $t_{r-1}(n)+1$ edges and $\tau_{r+1}(G)\geq2$ could be. They conjectured the following graph to be an extremal example.\\
{\bf Construction 3:} 
The graph $G_3$ is constructed as follows from the complete $r$-partite graph with vertex partition $V(G_3)=V_1\cup V_2 \cup \ldots \cup V_r$ where $|V_1|\geq|V_2|\geq \ldots \geq |V_r|$ and $|V_1|-|V_r|\leq 1$. Let $v_1,v_2\in V_1$ and $u_1,u_2 \in V_2$. Remove the edge $\{v_1,u_1\}$ and add the edges $\{v_1,v_2 \}$ and $\{u_1,u_2 \}$. The graph $G_3$ satisfies $\tau_{r+1}(G_3)=2$, $e(G_3)=t_{r}(n)+1$ and 
\begin{align*}
    T_{r+1}(G_3)=(|V_1|+|V_2|-2) \prod_{i=3}^r|V_i|.
\end{align*}
We will verify Xiao and Katona's~\cite{Katona} conjecture. 
\begin{theorem}
\label{conjKatona2}
  Let $r\in\mathbb{N}$ and $n\geq n(r)$ be a sufficiently large integer. If a graph $G$ on $n$ vertices has $t_{r}(n)+1$ edges and the copies of $K_{r+1}$ have an empty intersection, then the number of copies of $K_{r+1}$ is at least as many as in $G_3$.
\end{theorem}
The key ingredient for proving Theorems~\ref{trianglemain} and \ref{conjKatona2} is the following general structural result.  
\begin{theorem}
\label{propositionmainclique}
Let $r,t,s\in \mathbb{N}$ with $0<t<s$ and let $n\geq n(s,t,r)$ be sufficiently large. Let $G$ be an $n$-vertex graph with $t_{r}(n)+t$ edges satisfying $\tau_{r+1}(G)\geq s$. Denote $\mathcal{H}$ the family of graphs $H$ on $n$ vertices with $t_r(n)+t$ edges such that there exists a vertex partition $V(H)=V_1\cup V_2 \cup \ldots \cup V_r$ satisfying 
\begin{itemize}
    \item the class-edges form a matching of size $s$,
    \item $e_H^c(V_1,\ldots,V_r)\leq s-t$,
    \item all missing cross-edges are incident to one of the class-edges,
    \item if the class-edges are not all in the same class, then each missing cross-edge is incident to class-edges on both endpoints. 
\end{itemize}
Then $T_{r+1}(G)\geq T_{r+1}(H)$ for some $H\in \mathcal{H}$.
\end{theorem}
We will observe (Lemma~\ref{classsizesandcounting}) that 
$|V_i|=n/r+O(1)$ for all $1\leq i\leq r$. Hence, the family $\mathcal{H}$ is small. An optimization argument will reduce the family $\mathcal{H}$ to $G_1$ and $G_2$ in the case $r=3$, and to $G_3$ in the case $r> 3,s=2,t=1$. A similar cleaning argument could potentially  be done in the general case, however the computational effort needed seems not worth the outcome. Also, it is likely that for some parameters $r,t,s$, the extremal family realizing the minimum number of cliques of size $r+1$ contains several graphs.

For $t\geq s$ we know from a result of Erd\H{o}s~\cite{MR0252253} that the number of cliques of size $r+1$ is at least $(1+o(1))t \left(\frac{n}{r}\right)^{r-1}$ and the graph $G_4$ constructed by adding a matching of size $t$ to one of the classes of the complete balanced $r$-partite graph on $n$ vertices satisfies $\tau_{r+1}(G_4)=t$ and $T_{r+1}(G_4)=(1+o(1))t \left(\frac{n}{r}\right)^{r-1}$. Hence this problem is interesting only when $0<t<s$. 

Our result can be extended to consider $s$ and $t$ as functions of $n$. In particular, in the triangle case $r=3$, our proof allows $s$ to be linear in $n$. However, the methods we use, especially our main tool Theorem~\ref{cliquesupersat}, will not give us the entire range of $s$ where our theorem should hold. Thus, we do not attempt to optimize our proof with respect to the dependency of $n$ and $s$.

Our main tool is a 
stability-supersaturation result by Balogh, Bushaw, Collares, Liu, Morris and Sharifzadeh~\cite{baloghcliques}, which extends on the Erd\H{o}s-Simonovits stability theorem~\cite{Erdossimonovits} and F\"uredi's~\cite{Furedi} proof's of it. 

We would like to point out that Liu and Mubayi~\cite{Liumubayi} independently obtained similar results to ours.

Our paper is organized as follows. In Section~\ref{sec:propos} we prove Theorem~\ref{propositionmainclique}, in Section~\ref{sec:cor1} we conclude Theorem~\ref{trianglemain} and in Section~\ref{sec:cor2} we conclude Theorem~\ref{conjKatona2}.

\section{\bf Proof of Theorem~\ref{propositionmainclique}}
\label{sec:propos}
The well-known Tur\'an's theorem~\cite{Turanstheorem} determines the maximum number of edges in a $K_{r+1}$-free graph to be the number of edges $t_r(n)$ of the complete balanced $r$-partite graph. We will make use of the following bound on $t_r(n)$.
\begin{align}
\label{Turanedges}
\frac{n^2}{2} \left(1-\frac{1}{r} \right) - \frac{r}{2} \leq t_r(n) \leq \frac{n^2}{2} \left(1-\frac{1}{r}\right).
\end{align}
The classical Erd\H{o}s-Simonovits stability theorem~\cite{Erdossimonovits} asserts that any $n$-vertex $K_{r+1}$-free graph with almost $t_r(n)$ edges is close to the complete balanced $r$-partite graph. 
There are many different quantitative versions~\cite{Makingrpartite,KRS,RS} of this. A standard calculation shows that $n$-vertex graphs with almost $t_r(n)$ edges and a vertex partition with few class-edges need to have almost balanced class sizes. 
\begin{lemma}
\label{classsizesandcounting}
Let $s,n\in \mathbb{N}$, $t\in \mathbb{Z}$ and $G$ a graph on $n$ vertices and $t_r(n)+t$ edges with vertex partition $V(G)=V_1\cup V_2\cup \ldots \cup V_r$ such that the number of class-edges is $s$. Then
$|V_i|=\frac{n}{r}+O(1)$ for all $1\leq i \leq r$.
\end{lemma}
\begin{proof}
Without loss of generality we can assume that $|V_1|\geq |V_2|\geq \ldots \geq|V_r|$. Now, let $x=|V_1|-\frac{n}{r}$. Then,
\begin{align}
\label{classsizessquared}
\nonumber
    \sum_{i=1}^r |V_i|^2 &=  \left(\frac{n}{r}+x\right)^2 + \sum_{i=2}^r |V_i|^2  \geq \left( \frac{n}{r}+ x \right)^2 + \frac{\left(  \sum_{i=2}^r |V_i|\right)^2}{r-1} \\
    &\geq \left( \frac{n}{r}+x \right)^2 + \frac{\left(n \left(1-\frac{1}{r} \right)-x \right)^2}{r-1} 
\geq \frac{n^2}{r} + x^2. 
\end{align}
Thus, we can conclude
\begin{align*}
    & \quad \ t_r(n)+t=e(G)=\sum_{i=1}^re(G[V_i])+ e(V_1,V_2,\ldots,V_r)
    \leq s + \sum_{1\leq i<j\leq r}|V_i||V_j|\\
    &= s+ \frac{1}{2}\left(n^2-\sum_{i=1}^r|V_i|^2\right) \leq s+ \frac{n^2}{2}\left(1-\frac{1}{r}\right) -\frac{x^2}{2} \leq t_r(n)+\frac{r}{2}+s-x^2,
\end{align*}
implying $x=O(1)$. Note that in the second to last inequality we used \eqref{classsizessquared} and in the last inequality we used \eqref{Turanedges}. We conclude $|V_1|=\frac{n}{r}+O(1)$. Similarly, we can conclude $|V_r|=\frac{n}{r}+O(1)$.
\end{proof}
\begin{lemma}
\label{classsizesandcounting2}
Let $s,n\in \mathbb{N}$, $t\in \mathbb{Z}$ and $G$ a graph on $n$ vertices and $t_r(n)+t$ edges with vertex partition $V(G)=V_1\cup V_2\cup \ldots \cup V_r$ such that the number of class-edges is $s$. Then
\begin{align*}
T_{r+1}(G)=(1+o(1))s \left(\frac{n}{r}\right)^{r-1}.
\end{align*}
\end{lemma}
\begin{proof}
Any copy of $K_{r+1}$ needs to contain at least one class-edge. There are exactly $(\frac{n}{r}+O(1))^{r-1}$ copies of $K_{r+1}$ containing one given class-edge but no others, since the other $r-1$ vertices need to be chosen from different classes. Since the number of missing cross-edges is $O(1)$, they do not have an impact in this counting. There are $s$ class-edges and thus we conclude that the number of copies of $K_{r+1}$ containing exactly one class-edge is 
\begin{align*}
    (1+o(1))s\left(\frac{n}{r}\right)^{r-1}.
\end{align*}
The number of copies containing at least two class-edges is $O(n^{r-2})$, because there are $O(1)$ ways to chose three vertices among the vertices being incident to the class-edges and at most $O(n^{r-2})$ ways to chose the remaining $r-2$ vertices. We conclude
\begin{equation*}
T_{r+1}(G)=(1+o(1))s \left(\frac{n}{r}\right)^{r-1}.
\qedhere
\end{equation*}
\end{proof}

 We say that a graph $G$ is $x$-\emph{far from being $r$-partite} if $\chi(G')>r$ for every subgraph $G'\subset G$ with $e(G')>e(G)-x$. Balogh, Bushaw, Collares, Liu, Morris and Sharifzadeh~\cite{baloghcliques} proved that graphs which are $x$-far from being $r$-partite, contain many cliques of size $r+1$.
 \begin{theorem}[\cite{baloghcliques}]
\label{cliquesupersat}
For every $n,x,r\in \mathbb{N}$, the following holds. Every graph $G$ on $n$ vertices which is $x$-far from being $r$-partite contains at least 
\begin{align*}
    \frac{n^{r-1}}{e^{2r}r!}\left( e(G) +x - \left( 1-\frac{1}{r} \right)\frac{n^2}{2}\right)
\end{align*}
copies of $K_{r+1}$. 
\end{theorem}
With this tool in hand, we now prove Theorem~\ref{propositionmainclique}.
\begin{proof}[\bf Proof of Theorem~\ref{propositionmainclique}]
Let $G$ be a graph on $n$ vertices and $t_{r}(n)+t$ edges such that $\tau_{r+1}(G)\geq s$ and $n$ is large enough. If $G$ is $2sr!e^{2r}$-far from being $r$-partite, then by applying Theorem~\ref{cliquesupersat}, the number of copies of $K_{r+1}$ in $G$ is at least 
    \begin{align*}
        T_{r+1}(G)\geq \frac{n^{r-1}}{r!e^{2r}}\left( t_{r}(n) +t+ 2sr!e^{2r} - \left( 1-\frac{1}{r}\right)\frac{n^2}{2}\right)\geq  sn^{r-1}, 
    \end{align*}
where we used \eqref{Turanedges} to lower bound $t_r(n)$. By Lemma~\ref{classsizesandcounting2}, $T_{r+1}(H)=(1+o(1))s(\frac{n}{r})^{r-1}$ for every $H\in \mathcal{H}$, and thus $T_{r+1}(H)\leq T_{r+1}(G)$. Hence, we can assume that $G$ is not $2sr!e^{2r}$-far from being $r$-partite. Then there exists a subgraph $G'\subset G$ with $\chi(G')\leq r$ and 
\begin{align*}
e(G')>e(G)-2sr!e^{2r}s > t_r(n) - 2sr!e^{2r}.
\end{align*}
Let $V(G')=V_1\cup V_2 \cup \ldots \cup V_r$ be a partition such that $V_1,V_2,\ldots,V_r$ are independent sets in $G'$. By Lemma~\ref{classsizesandcounting}, the class sizes are roughly balanced, that is $|V_i|= \frac{n}{r}+ O(1)$ for all $1\leq i \leq r$.

Since $G'\subset G$, and $e(G')>e(G)-2sr!e^{2r}$, the vertex partition $V(G)=V_1\cup V_2 \cup \ldots \cup V_r$ contains at most $2sr!e^{2r}$ class-edges in $G$. If the number of class-edges in $G$ is less than $s$, then there is a $K_{r+1}$-covering set of size at most $s-1$, contradicting $\tau_{r+1}(G)\geq s$. If the number of class-edges is more than $s$, then by Lemma~\ref{classsizesandcounting2}
\begin{align*}
    T_{r+1}(G)=(1+o(1))(s+1)\left(\frac{n}{r}\right)^{r-1}\geq (1+o(1))s\left(\frac{n}{r}\right)^{r-1}=T_{r+1}(H)
\end{align*}
for every $H\in \mathcal{H}$. Therefore, we can assume that the number of class-edges in $G$ is exactly $s$. These $s$ edges need to form a matching, otherwise there is a $K_{r+1}$-covering set of size $s-1$, contradicting $\tau_{r+1}(G)\geq s$. Further, $e^c(V_1,\ldots, V_r)\leq s-t$, because otherwise 
\begin{align*}
    e(G) &= e(V_1,V_2,\ldots,V_r)+\sum_{i=1}^r e(G[V_i])\leq t_{r+1}(n) -e^c(V_1,V_2,\ldots,V_r)+s \\ &< t_{r+1}(n) + t =e(G).  
\end{align*}
Next, we assume that there is a missing cross-edge $uv\not\in E(G)$ not incident to any of the class-edges. Take a cross-edge $xy\in E(G)$ which is incident to exactly one class-edge. We will show that $T_{r+1}(G+uv-xy) < T_{r+1}(G)$. 
Adding $uv$ to $G$ increases the number of cliques of size $r+1$ by at most $sn^{r-3}$,
because for each of the $s$ class-edges $e$ the number of cliques of size $r+1$ containing $e$ and $uv$ is increased by at most $n^{r-3}$.
Removing $xy$ from $G+uv$ decreases the number of cliques of size $r+1$ by at least 
\begin{align*}
    \min (|V_i|-s)^{r-2}\geq \left(\frac{n}{r}+O(1)-s \right)^{r-2}=\Omega(n^{r-2}),
\end{align*}    
where we used that in each class $V_i$ the number of vertices incident to no missing cross-edge is at least $|V_i|-s$. 
Thus, performing this edge flip decreases the number of cliques of size $r+1$, i.e. $T_{r+1}(G+uv-xy)< T_{r+1}(G)$. We repeat this process until we end up with a graph $\bar{G}$ such that $T_{r+1}(\bar{G})\leq T_{r+1}(G)$ and every missing cross-edge is incident to a class-edge.\\
If $\bar{G}$ has all class-edges in the same class, then $\bar{G}\in\mathcal{H}$. Thus, we can assume that $\bar{G}$ has the property that not all class-edges are in the same class and there is a missing cross-edge not adjacent to class-edges on both ends. Denote $G'$ the graph constructed from $\bar{G}$ by adding all missing cross-edges. This graph satisfies 
\begin{align*}
    T_{r+1}(G')&\leq T_{r+1}(\bar{G})+ \left(2e_{\bar{G}}^c(V_1,V_2,\ldots,V_r)-1\right)\left( \frac{n}{r}+O(1)\right)^{r-2} \\ &+s e_{\bar{G}}^c(V_1,V_2,\ldots,V_r) n^{r-3} \\
    &\leq T_{r+1}(G)+ (1+o(1))\left(2e_{\bar{G}}^c(V_1,V_2,\ldots,V_r)-1\right) \left(\frac{n}{r}\right)^{r-2},
\end{align*}
because for each class-edge $e$ and each added edge $e'$, the number of cliques of size $r+1$ containing $e$ and $e'$ increases by at most $\left( \frac{n}{r}+O(1)\right)^{r-2}$
if $e$ and $e'$ share a vertex and by at most $n^{r-3}$ if $e$ and $e'$ are disjoint. 

There is a set $M$ of $e_{\bar{G}}^c(V_1,V_2,\ldots,V_r)\leq s-1$ cross-edges such that each $e\in M$ is incident to class-edges on both endpoints and every class-edge is incident to edges from $M$ on at most one endpoint. Denote $H$ the graph constructed from $G'$ by removing those edges. This graph satisfies $H\in \mathcal{H}$ and
\begin{align*}
    T_{r+1}(H)&\leq T_{r+1}(G')-(2+o(1))e_{\bar{G}}^c(V_1,V_2,\ldots,V_r) \left( \frac{n}{r}+O(1)-s\right)^{r-2}\\
    &\leq T_{r+1}(G),
\end{align*}
completing the proof.
\end{proof}


\section{\bf Proof of Theorem~\ref{trianglemain}}
\label{sec:cor1}

\begin{proof}[\bf Proof of Theorem~\ref{trianglemain}]
By Theorem~\ref{propositionmainclique}, we can assume that $G$ has a vertex partition $V(G)=A \cup B$ where $|A|\geq |B|$ with exactly $s$ class-edges forming a matching. Let $|A|=\lceil n/2 \rceil +a, |B|=\lfloor n/2 \rfloor -a$ for some $a\geq 0$. We have
\begin{align*}
    \left\lfloor \frac{n^2}{4}\right\rfloor +t& =e(G)=|A||B|-e^c(A,B)+s \\
    &=\left( \left\lceil \frac{n}{2}\right\rceil +a\right)\left( \left\lfloor \frac{n}{2}\right\rfloor -a \right)-e^c(A,B)+s,
\end{align*}
and thus the number of missing cross-edges is
\begin{align*}
    e^c(A,B)=s-t-a^2-\1_{\{2\nmid n\}}.
\end{align*}
Let $M_A$ be the matching of class-edges inside $A$ and $M_B$ be the matching of class-edges inside $B$. Denote $G'$ the graph constructed from $G$ by adding all missing cross-edges $e^c(A,B)$. The number of triangles in this graph is $|M_A||B|+|M_B||A|$. \\
{\bf Case 1: $|M_B|>0$}. \\
The number of triangles in $G$ is at least $|M_A||B|+|M_B||A|-2e^c(A,B)$, because each missing cross-edge is in at most two triangles in $G'$. Since $|A|\geq|B|$, we get
\begin{align*}
    T_{3}(G)&\geq (s-1) \left(\left\lfloor \frac{n}{2}\right\rfloor-a\right) + \left( \left\lceil \frac{n}{2}\right\rceil +a\right)-2e^c(A,B) \\
&= (s-1) \left( \left\lfloor \frac{n}{2} \right\rfloor -a \right) + \left( \left\lceil \frac{n}{2}  \right\rceil +a \right) -2\left(s-t-a^2-\1_{\{2\nmid n\}}a\right) \\ &\geq T_3(G_1).
\end{align*}
{\bf Case 2: $|M_B|=0$}.\\
The number of triangles in $G$ is at least $|M_A||B|-e^c(A,B)$, because each missing cross-edge is in at most one triangle in $G'$. Therefore
\begin{align*}
    T_3(G)&\geq |M_A||B|-e^c(A,B)= s \left(\left\lfloor \frac{n}{2}\right\rfloor-a\right)-\left(s-t-a^2-\1_{\{2 \nmid n\}a}\right)  \\ 
    &\geq T_3(G_2).
    \hfill \qedhere
\end{align*}
\end{proof}

\newcommand{\Mod}[1]{\ (\mathrm{mod}\ #1)}

\section{\bf Proof of Theorem~\ref{conjKatona2}}
\label{sec:cor2}
Let $\ell=n \mod r$ and $m$ be an integer such that $n=rm+\ell$. The number of cliques of size $r+1$ in $G_3$ is 
\begin{align*}
    T_{r+1}(G_3)&=(|V_1|+|V_2|-2) \prod_{i=3}^r|V_i| =\begin{cases}
    \left(2m-2\right)   m^{r-2}&, \text{ iff } \ell=0, \\
     \left(2m-1 \right)   m^{r-2}&, \text{ iff } \ell=1, \\
    2m  (m+1)^{\ell-2}  m^{r-\ell}&, \text{ otherwise.}  \\
         \end{cases}
\end{align*}
\begin{proof}[\bf Proof of Theorem~\ref{conjKatona2}]
Let $G$ be an $n$-vertex graph with $t_r(n)+1$ edges satisfying $\tau_{r+1}(G)\geq 2$.
By applying Theorem~\ref{propositionmainclique} for $s=2$ and $t=1$, we can assume without loss of generality that $G\in \mathcal{H}$ and thus has a vertex partition $V(G)=V_1 \cup V_2 \cup \ldots \cup V_r$ where $|V_1|\geq|V_2|\geq \ldots \geq |V_r|$ with exactly two class-edges $e_1,e_2$ and at most one missing cross-edge. We have
\begin{align}
\label{classsizesedges}
    \sum_{1 \leq i<j\leq r} |V_i||V_j| -e^c(V_1,V_2,\ldots,V_r)+2=e(G)=t_r(n)+1.
\end{align}
This means we have two cases: 
\begin{align*}e^c(V_1,V_2,\ldots,V_r)=0\quad \quad \quad \text{or} \quad \quad \quad e^c(V_1,V_2,\ldots,V_r)=1.
\end{align*}
{\bf Case 1: $e^c(V_1,V_2,\ldots,V_r)=0$}.\\
In this case 
\begin{align*}
    \sum_{1 \leq i<j\leq r} |V_i||V_j|=t_r(n)-1
\end{align*}
by \eqref{classsizesedges}. Note that $|V_1|-|V_r|=2$, because if $|V_1|-|V_r|\geq 3$, then moving one vertex from $V_1$ to $V_r$ increases the sum $\sum_{1 \leq i<j\leq r} |V_i||V_j|$ by at least two and thus 
\begin{align}
\label{ineqaulitysumclasses}
    \sum_{1 \leq i<j\leq r} |V_i||V_j|\leq t_r(n)-2,
\end{align}
a contradiction. If $|V_1|-|V_r|\leq 1,$ then $\sum_{1 \leq i<j\leq r} |V_i||V_j|=t_r(n)$, giving a contradiction as well. Thus, we have $|V_1|-|V_r|=2$. Similarly, $|V_2|-|V_{r-1}|\leq1$, as otherwise  \eqref{ineqaulitysumclasses} holds. Conclusively, there are only the following types of combination of class sizes. We list them depending on $\ell=n \mod r$. If $\ell\in\{0,1\}$, then
\begin{itemize}
    \item $|V_1|=\ldots=|V_{\ell+1}|= m+1$, $|V_{\ell+2}|=\ldots = |V_{r-1}|=m$, $|V_r|=m-1$. 
\end{itemize}
If $\ell=2$, then
\begin{itemize}
    \item Type 1: $|V_1|=m+2$, $|V_{2}|=\ldots = |V_{r}|=m$, 
    \item Type 2: $|V_1|=|V_2|=|V_{3}|= m+1$, $|V_{4}|=\ldots = |V_{r-1}|=m$, $|V_r|=m-1$. 
\end{itemize}
If $3\leq \ell \leq r-3$, then
\begin{itemize}
    \item Type 1: $|V_1|=m+2$, $|V_2|=\ldots=|V_{\ell-1}|=m+1$, $|V_{\ell}|=\ldots = |V_{r}|=m$, 
    \item Type 2: $|V_1|=\ldots=|V_{\ell+1}|= m+1$, $|V_{\ell+2}|=\ldots = |V_{r-1}|=m$, $|V_r|=m-1$. 
\end{itemize}
If $\ell=r-2$, then
\begin{itemize}
    \item Type 1: $|V_1|=m+2$, $|V_2|=\ldots=|V_{r-3}|=m+1$, $|V_{r-2}|=\ldots = |V_{r}|=m$, 
    \item Type 2: $|V_1|=\ldots=|V_{r-1}|= m+1$,  $|V_r|=m-1$. 
\end{itemize}
If $\ell=r-1$, then
\begin{itemize}
    \item $|V_1|=m+2$, $|V_2|=\ldots=|V_{r-2}|=m+1$, $|V_{r-1}|= |V_{r}|=m$. 
\end{itemize}
Denote the two class-edges $e_1,e_2$. Let $1\leq \alpha \leq \beta \leq r$ such that $e_1\subset V_\alpha$ and $e_2\subset V_\beta$.  The number of copies of $K_{r+1}$ containing $e_1$ but not $e_2$ is $\prod_{i\neq \alpha} |V_i|$. Similarly, the number of copies of $K_{r+1}$ containing $e_2$ but not $e_1$ is $\prod_{i\neq \beta} |V_i|$. Thus, the total number of copies of $K_{r+1}$ in $G$ is at least 
\begin{align}
\label{classsizesproduct}
    T_{r+1}(G)\geq \prod_{i\neq \alpha} |V_i| + \prod_{i\neq \beta} |V_i| \geq 2 \prod_{i=2}^r |V_i|.
\end{align}
We will now check that for any possible combination of class sizes the right hand side in \eqref{classsizesproduct} is at least $T_{r+1}(G_3)$. We distinguish cases depending on $\ell=n \mod r$.
If $\ell=0$, then
\begin{align*}
    2 \prod_{i=2}^r |V_i|= 2 m^{r-2}(m-1)\geq (2m-2)m^{r-2}=T_{r+1}(G_3).
\end{align*}
If $\ell=1$, then
\begin{align*}
    2 \prod_{i=2}^r |V_i|= 2 (m+1)m^{r-3}(m-1) \geq (2m-1)m^{r-2}=T_{r+1}(G_3).
\end{align*}
If $\ell=2$, then
\begin{align*}
    2 \prod_{i=2}^r |V_i|= \begin{cases}
    2m^{r-1},& \text{for Type 1}\\
     2 (m+1)^2m^{r-4}(m-1) ,& \text{for Type 2}\end{cases}\geq 2m^{r-1}=T_{r+1}(G_3).
\end{align*}
If $3\leq \ell \leq r-3$, then
\begin{align*}
    2 \prod_{i=2}^r |V_i|&= \begin{cases}
    2(m+1)^{\ell-2} m^{r-\ell+1},& \text{for Type 1}\\
     2 (m+1)^\ell m^{r-\ell-2}(m-1) ,& \text{for Type 2}\end{cases}\geq 2m^{r-\ell+1}(m+1)^{\ell-2}\\
     &=T_{r+1}(G_3).
\end{align*}
If $\ell=r-2$, then
\begin{align*}
        2 \prod_{i=2}^r |V_i|&= \begin{cases}
    2(m+1)^{r-4} m^{3},& \text{for Type 1}\\
     2 (m+1)^{r-2} (m-1) ,& \text{for Type 2}\end{cases}\geq 2m^{3}(m+1)^{r-4}\\
     &=T_{r+1}(G_3).
\end{align*}
Finally, if $\ell=r-1$, then
\begin{align*}
    2 \prod_{i=2}^r |V_i|= 
    2(m+1)^{r-3} m^{2}=T_{r+1}(G_3).
\end{align*}
Thus, in Case 1, $T_{r+1}(G)\geq T_{r+1}(G_3)$.

{\bf Case 2: $e^c(V_1,V_2,\ldots,V_r)=1$}.\\
Observe that in this case the class sizes of $G$ and $G_3$ are the same. Further, the number of missing cross-edges is exactly one. Now, we distinguish two cases depending on where $e_1$ and $e_2$ are. Case $2$a is when both class-edges are inside the same class. In Case $2$b, $e_1$ and $e_2$ are in different classes. \\
{\bf Case 2a:} $e_1,e_2\subseteq V_\alpha$ for some $1\leq \alpha \leq r.$ \\
The missing edge is incident to $e_1$ or $e_2$. The second endpoint of the missing edge is in $V_\beta$ for some $\beta\neq \alpha$. Then
\begin{align*}
T_{r+1}(G) &= 2\prod_{i\neq \alpha} |V_i|- \prod_{i\neq \alpha,\beta}|V_i| = (2|V_{\beta}|-1) \prod_{i\neq \alpha,\beta}|V_i|\\
&\geq  (|V_\alpha|+|V_{\beta}|-2) \prod_{i\neq \alpha,\beta}|V_i| \geq (|V_1|+|V_2|-2) \prod_{i=3}^r |V_i|= T_{r+1}(G_3), 
\end{align*}
where the sum of the factors are the same in the two products, hence the product is larger when the factors are `closer' to each other.\\
\noindent
{\bf Case 2b:} $e_1\subseteq V_\alpha, e_2 \subseteq V_\beta$ for some $\alpha \neq \beta$. \\
Since the missing edge is incident to both class-edges,
\begin{align*}
    T_{r+1}(G)&= (|V_\beta| -1) \prod_{i\neq \alpha,\beta }|V_i| + (|V_\alpha| -1) \prod_{i\neq \alpha,\beta }|V_i| \\ &=  (|V_\alpha|+|V_\beta| -2) \prod_{i\neq \alpha,\beta }|V_i| \geq (|V_1|+|V_2| -2) \prod_{i=3}^r |V_i|=T_{r+1}(G_3), 
\end{align*}
where the last inequality holds by the same reasoning as in Case $2$a.

\end{proof}

\section*{Acknowledgement}
We would like to thank an anonymous referee for helpful comments, in particular, for pointing out an error in an earlier version of this paper.

\bibliographystyle{abbrv}
\bibliography{Katona}

\begin{thebibliography}{10}

\bibitem{baloghcliques}
J.~Balogh, N.~Bushaw, M.~Collares, H.~Liu, R.~Morris, and M.~Sharifzadeh.
\newblock The typical structure of graphs with no large cliques.
\newblock {\em Combinatorica}, 37(4):617--632, 2017.

\bibitem{Makingrpartite}
J.~Balogh, F.~C. Clemen, M.~Lavrov, B.~Lidick\'{y}, and F.~Pfender.
\newblock Making ${K}_{r+1}$-free graphs $r$-partite.
\newblock {\em Combinatorics, Probability and Computing}, pages 1--10, 2020.

\bibitem{MR81469}
P.~Erd\H{o}s.
\newblock Some theorems on graphs.
\newblock {\em Riveon Lematematika}, 9:13--17, 1955.

\bibitem{MR0137661}
P.~Erd\H{o}s.
\newblock On a theorem of {R}ademacher-{T}ur\'{a}n.
\newblock {\em Illinois J. Math.}, 6:122--127, 1962.

\bibitem{MR0252253}
P.~Erd\H{o}s.
\newblock On the number of complete subgraphs and circuits contained in graphs.
\newblock {\em \v{C}asopis P\v{e}st. Mat.}, 94:290--296, 1969.

\bibitem{Erdossimonovits}
P.~Erd\H{o}s and M.~Simonovits.
\newblock A limit theorem in graph theory.
\newblock {\em Studia Sci. Math. Hungar}, 1:51--57, 1966.

\bibitem{Furedi}
Z.~F\"{u}redi.
\newblock A proof of the stability of extremal graphs, {S}imonovits' stability
  from {S}zemer\'{e}di's regularity.
\newblock {\em J. Combin. Theory Ser. B}, 115:66--71, 2015.

\bibitem{KRS}
D.~Kor\'andi, A.~Roberts, and A.~Scott.
\newblock Exact stability for {T}ur\'an's theorem.
\newblock {\em arXiv:2004.10685}, 2020.

\bibitem{Liumubayi}
X.~Liu and D.~Mubayi.
\newblock On a generalized {E}rd{\H{o}}s-{R}ademacher problem.
\newblock {\em arXiv:2005.07224}, 2020.

\bibitem{Lovasz}
L.~Lov\'{a}sz and M.~Simonovits.
\newblock On the number of complete subgraphs of a graph. {II}.
\newblock In {\em Studies in pure mathematics}, pages 459--495. Birkh\"{a}user,
  Basel, 1983.

\bibitem{Mantel}
W.~Mantel.
\newblock Problem 28.
\newblock {\em Wiskundige Opgaven}, 10:60--61, 1907.

\bibitem{RS}
A.~Roberts and A.~Scott.
\newblock Stability results for graphs with a critical edge.
\newblock {\em arXiv:1610.08389}, 2018.

\bibitem{Turanstheorem}
P.~Tur\'{a}n.
\newblock Eine {E}xtremalaufgabe aus der {G}raphentheorie.
\newblock {\em Mat. Fiz. Lapok}, 48:436--452, 1941.

\bibitem{Katona}
C.~Xiao and G.~O.~H. Katona.
\newblock The number of triangles is more when they have no common vertex.
\newblock {\em arXiv:2003.04450}, 2020.

\end{thebibliography}

\end{document}